\numberwithin{equation}{section}
\newcommand{\nchi}{{\raise.3ex\hbox{\(\chi\)}}}
\newcommand{\N}{\mathbb{N}}
\newcommand{\R}{\mathbb{R}}
\newcommand{\E}{{\mathcal E}}
\newcommand{\Ch}{{\rm Ch}}
\newcommand{\sfd}{{\sf d}}
\renewcommand{\d}{{\mathrm d}}
\newcommand{\X}{{\rm X}}
\newcommand{\mm}{\mathfrak{m}}
\newcommand{\LIP}{{\rm LIP}}
\newcommand{\Lip}{{\rm Lip}}
\newcommand{\lip}{{\rm lip}}
\newcommand{\limi}{\varliminf}
\newcommand{\lims}{\varlimsup}
\newcommand{\eps}{\varepsilon}
\newcommand{\fr}{\penalty-20\null\hfill\(\blacksquare\)}
\newtheorem{theorem}{Theorem}[section]
\newtheorem{corollary}[theorem]{Corollary}
\newtheorem{lemma}[theorem]{Lemma}
\newtheorem{proposition}[theorem]{Proposition}
\newtheorem{example}[theorem]{Example}
\newtheorem{remark}[theorem]{Remark}
\title{Gamma-convergence of Cheeger energies with respect to increasing distances}
\author{Danka Lu\v{c}i\'{c}}
\address[Danka Lu\v{c}i\'{c}]{Universit\`{a} di Pisa, Dipartimento di Matematica,
Largo Bruno Pontecorvo 5, 56127 Pisa, Italy}
\email{danka.lucic@dm.unipi.it}
\author{Enrico Pasqualetto}
\address[Enrico Pasqualetto]{Scuola Normale Superiore, Piazza dei Cavalieri 7,
56126 Pisa, Italy}
\email{enrico.pasqualetto@sns.it}
\begin{document}
\date{\today} 
\keywords{Cheeger energy, Mosco-convergence, infinitesimal Hilbertianity}
\subjclass[2020]{53C23, 49J45, 51F30}
\begin{abstract}
We prove a \(\Gamma\)-convergence result for Cheeger energies
along sequences of metric measure spaces, where the measure space
is kept fixed, while distances are monotonically converging from
below to the limit one. As a consequence, we show that the
infinitesimal Hilbertianity condition is stable under
this kind of convergence of metric measure spaces.
\end{abstract}
\maketitle
\section{Introduction}
In the successful theory of weakly differentiable functions
over metric measure spaces, a leading role is played by the
so-called Cheeger \(p\)-energy, which was introduced in
\cite{Cheeger00} and generalises the classical Dirichlet
\(p\)-energy functional. The purpose of this paper is
to study the convergence of Cheeger \(p\)-energies along a
sequence of metric measure spaces, where the underlying set
and the measure are fixed, while distances monotonically converge
from below.

More precisely, given a metric measure space \((\X,\sfd,\mm)\)
and a sequence \((\sfd_i)_{i\in\N}\) of distances on \(\X\)
inducing the same topology as \(\sfd\) and satisfying
\(\sfd_i\nearrow\sfd\), we prove (in Theorem
\ref{thm:main_Mosco-conv}) that for any \(p\in(1,\infty)\)
the Cheeger \(p\)-energies
\(\E_{\Ch,p}^{\sfd_i}\colon L^p(\mm)\to[0,+\infty]\)
associated with \((\X,\sfd_i,\mm)\) converge to
\(\E_{\Ch,p}^\sfd\) in the sense of Mosco. As shown in
Example \ref{ex:failure_conv_from_above}, this kind of
statement might totally fail in the case where \(\sfd_i\searrow\sfd\).
Since the family of quadratic forms is closed under Mosco-convergence,
an interesting consequence of Theorem \ref{thm:main_Mosco-conv} is the stability
of the infinitesimal Hilbertianity condition (that was introduced in
\cite{Gigli12} and states the quadraticity of the Cheeger
\(2\)-energy functional) with respect to increasing limits of
the involved distances.

Sub-Riemannian manifolds constitute a significant example of metric
structures where the above results apply, as  the induced length
distances can be monotonically approximated from below by
Riemannian ones; cf.\ the discussion in Remark \ref{rmk:sub-Riem}.
\medskip

A previous result on 
the Mosco-convergence of Cheeger energies was
obtained in \cite[Theorem 6.8]{GigliMondinoSavare13} for
sequences of \({\sf CD}(K,\infty)\) spaces that converge with
respect to (a variant of) the pointed measured
Gromov--Hausdorff topology. However, since measured
Gromov--Hausdorff convergence is a zeroth-order concept,
while the Cheeger energy is a first-order one, we cannot
expect such Mosco-convergence result to hold on arbitrary
metric measure spaces. Indeed, given an arbitrary metric measure
space \((\X,\sfd,\mm)\), one can easily construct a sequence of
discrete measures \((\mm_i)_{i\in\N}\) that weakly converge
to \(\mm\); consequently, since the Cheeger energies associated
with the spaces \((\X,\sfd,\mm_i)\) are identically zero, the
Mosco-convergence result will generally fail. In the case
of \({\sf CD}(K,\infty)\) spaces, the convergence of the
Cheeger energies is boosted by the uniform lower bound on
the Ricci curvature (encoded in the \(\sf CD\) condition),
which is a second-order notion. Conversely, in our main
Theorem \ref{thm:main_Mosco-conv} we do not require any
regularity at the level of the involved metric measure spaces,
but instead we consider a notion of convergence that is much
stronger than the pointed measured Gromov--Hausdorff one.
\medskip

We conclude the introduction by briefly describing an
approximation result for Lipschitz functions (Proposition
\ref{prop:approx_lip_a}) that will have an essential role
in the proof of Theorem \ref{thm:main_Mosco-conv}. Under
the same assumptions as in the Mosco-convergence result for
Cheeger energies, we prove that every \(\sfd\)-Lipschitz function
\(f\) can be approximated (in the integral sense)
by a \(\sfd_i\)-Lipschitz function \(g\), for some
index \(i\in\N\) sufficiently large, such that the integral
of the \(p\)-power of the asymptotic slope of \(g\) is close
to that of \(f\). This goal is achieved by appealing to the
asymptotic-slope-preserving extension result for Lipschitz
functions  obtained in \cite{DMGP20}.
\medskip

\textbf{Acknowledgements.}
The authors wish to thank Tapio Rajala for the careful reading of
a preliminary version of this paper. The first named author was
supported by the project 2017TEXA3H ``Gradient flows, Optimal
Transport and Metric Measure Structures", funded by the Italian
Ministry of Research and University. The second named author was
supported by the Balzan project led by Luigi Ambrosio.
\section{Preliminaries}
Let \((\X,\sfd)\) be a given metric space. We denote by \(\tau(\sfd)\) the topology
on \(\X\) induced by the distance \(\sfd\). The open ball and the closed ball
of center \(x\in\X\) and radius \(r>0\) are given by
\[
B_r^\sfd(x)\coloneqq\big\{y\in\X\;\big|\;\sfd(x,y)<r\big\},\qquad
\bar B_r^\sfd(x)\coloneqq\big\{y\in\X\;\big|\;\sfd(x,y)\leq r\big\},
\]
respectively. The space of \(\sfd\)-Lipschitz functions \(f\colon\X\to\R\) will be
denoted by \(\LIP_\sfd(\X)\). Given any \(f\in\LIP_\sfd(\X)\) and
\(E\subseteq\X\), we denote by \(\Lip_\sfd(f;E)\in[0,+\infty)\)
and \(\lip_a^\sfd(f)\colon\X\to[0,+\infty)\) the Lipschitz constant of
\(f|_E\) and the asymptotic slope of \(f\), respectively. Videlicet, we set
\[\begin{split}
\Lip_\sfd(f;E)&\coloneqq\sup\bigg\{\frac{\big|f(x)-f(y)\big|}{\sfd(x,y)}
\;\bigg|\;x,y\in E,\,x\neq y\bigg\},\\
\lip_a^\sfd(f)(x)&\coloneqq\inf_{r>0}\Lip_\sfd\big(f;B_r^\sfd(x)\big),
\quad\text{ for every }x\in\X,
\end{split}\]
where we adopt the convention that \(\Lip_\sfd(f;\emptyset)=\Lip_\sfd\big(f;\{x\}\big)
\coloneqq 0\). For the sake of brevity, we will use the shorthand notation
\(\Lip_\sfd(f)\coloneqq\Lip_\sfd(f;\X)\). Observe that \(\lip_a^\sfd(f)\leq\Lip_\sfd(f)\).
\begin{remark}\label{rmk:ineq_dist}{\rm
Let \(\X\) be a non-empty set. Let \(\sfd\) and \(\sfd'\) be distances on \(\X\)
such that \(\sfd\leq\sfd'\). Then for any \(x,x',y,y'\in\X\) it holds that
\[
\big|\sfd(x,y)-\sfd(x',y')\big|\leq\sfd(x,x')+\sfd(y,y')\leq
\sfd'(x,x')+\sfd'(y,y')\leq\sqrt 2\,(\sfd'\times\sfd')\big((x,y),(x',y')\big),
\]
thus \(\sfd\colon\X\times\X\to[0,+\infty)\) is \((\sfd'\times\sfd')\)-continuous,
where \(\sfd'\times\sfd'\) stands for the product distance
\[
(\sfd'\times\sfd')\big((x,y),(x',y')\big)\coloneqq
\sqrt{\sfd'(x,x')^2+\sfd'(y,y')^2},\quad\text{ for every }(x,y),(x',y')\in\X\times\X.
\]
Moreover, given \(f\in\LIP_\sfd(\X)\) and \(E\subseteq\X\), we can estimate
\(\big|f(x)-f(y)\big|\leq\Lip_\sfd(f;E)\,\sfd'(x,y)\) for every \(x,y\in E\).
This shows that \(\LIP_\sfd(\X)\subseteq\LIP_{\sfd'}(\X)\) and that
\[
\Lip_{\sfd'}(f;E)\leq\Lip_\sfd(f;E),\quad\text{ for every }
f\in\LIP_\sfd(\X)\text{ and }E\subseteq\X.
\]
In particular, we obtain that \(\lip_a^{\sfd'}(f)\leq\lip_a^\sfd(f)\)
for every \(f\in\LIP_\sfd(\X)\).
\fr}\end{remark}

By a metric measure space \((\X,\sfd,\mm)\) we mean a complete and separable
metric space \((\X,\sfd)\), which is endowed with a boundedly-finite Borel
measure \(\mm\geq 0\). One of the possible ways to introduce Sobolev spaces
on \((\X,\sfd,\mm)\) is via relaxation of upper gradients. Instead of the original approach that
was introduced by Cheeger \cite{Cheeger00}, we present its
equivalent reformulation (via relaxation of the asymptotic slope)
that was studied by Ambrosio--Gigli--Savar\'{e} in
\cite{AmbrosioGigliSavare11-3}.
\medskip

Given a metric measure space \((\X,\sfd,\mm)\) and an exponent \(p\in(1,\infty)\),
let us define the asymptotic \(p\)-energy functional \(\E_{a,p}^\sfd\colon
L^p(\mm)\to[0,+\infty]\) as
\[
\E_{a,p}^\sfd(f)\coloneqq\left\{\begin{array}{ll}
\frac{1}{p}\int\lip_a^\sfd(f)^p\,\d\mm,\\
+\infty,
\end{array}\quad\begin{array}{ll}
\text{ if }f\in\LIP_\sfd(\X)\text{ is boundedly-supported,}\\
\text{ otherwise.}
\end{array}\right.
\]
Then the Cheeger \(p\)-energy functional
\(\E_{\Ch,p}^\sfd\colon L^p(\mm)\to[0,+\infty]\) is defined
as the \(L^p(\mm)\)-lower semicontinuous envelope of \(\E_{a,p}^\sfd\).
Videlicet, for any function \(f\in L^p(\mm)\) we define
\[
\E_{\Ch,p}^\sfd(f)\coloneqq\inf\bigg\{\limi_{n\to\infty}\E_{a,p}^\sfd(f_n)
\;\bigg|\;(f_n)_n\subseteq L^p(\mm),\,f_n\to f
\text{ strongly in }L^p(\mm)\bigg\}.
\]
It turns out that \(\E_{\Ch,p}^\sfd\) is weakly lower semicontinuous,
meaning that \(\E_{\Ch,p}^\sfd(f)\leq\limi_n\E_{\Ch,p}^\sfd(f_n)\) whenever
\(f\in L^p(\mm)\) and \((f_n)_n\subseteq L^p(\mm)\) satisfy \(f_n\rightharpoonup f\)
weakly in \(L^p(\mm)\). The \(p\)-Sobolev space on \((\X,\sfd,\mm)\) is then defined as
the finiteness domain of \(\E_{\Ch,p}^\sfd\), videlicet
\[
W^{1,p}(\X)\coloneqq\big\{f\in L^p(\mm)\;\big|\;\E_{\Ch,p}^\sfd(f)<+\infty\big\}.
\]
It holds that \(W^{1,p}(\X)\) is a Banach space if endowed with
the following norm:
\[
\|f\|_{W^{1,p}(\X)}\coloneqq\big(\|f\|_{L^p(\mm)}^p+p\,\E_{\Ch,p}^\sfd(f)
\big)^{1/p},\quad\text{ for every }f\in W^{1,p}(\X).
\]
In general, the \(2\)-Sobolev space is not Hilbert. A metric measure space
\((\X,\sfd,\mm)\) is said to be infinitesimally Hilbertian
\cite{Gigli12} provided the associated \(2\)-Sobolev space
\(W^{1,2}(\X)\) is Hilbert, or equivalently provided
\(\E_{\Ch,2}^\sfd\) is a quadratic form.
\begin{remark}\label{rmk:ineq_ener}{\rm
Let \((\X,\sfd,\mm)\) be a metric measure space. Let \(\sfd'\) be a distance
on \(\X\) with \(\sfd\leq\sfd'\) and \(\tau(\sfd)=\tau(\sfd')\), thus
\((\X,\sfd',\mm)\) is a metric measure space as well. Then Remark
\ref{rmk:ineq_dist} yields
\begin{equation}\label{eq:ineq_ener}
\E_{a,p}^{\sfd'}\leq\E_{a,p}^\sfd,\qquad\E_{\Ch,p}^{\sfd'}\leq\E_{\Ch,p}^\sfd,
\end{equation}
for any given exponent \(p\in(1,\infty)\).
\fr}\end{remark}
\section{An approximation result}
Aim of this section is to achieve an approximation
result for Lipschitz functions (\emph{i.e.}, Proposition \ref{prop:approx_lip_a}),
which will be a key tool in order to prove our main
Theorem \ref{thm:main_Mosco-conv}.
\begin{remark}\label{rmk:unif_conv_cpt}{\rm
Let \(\X\) be a non-empty set and \((\sfd_i)_{i\in\bar\N}\)
a sequence of distances on \(\X\) satisfying
\[
\sfd_i(x,y)\nearrow\sfd_\infty(x,y),\quad\text{ for every }x,y\in\X.
\]
Then \(\sfd_i\to\sfd_\infty\) uniformly on each subset of \(\X\times\X\)
that is compact with respect to \(\tau(\sfd_\infty\times\sfd_\infty)\).
Indeed, Remark \ref{rmk:ineq_dist} grants that \(\sfd_i\colon\X\times\X\to\R\)
is \((\sfd_\infty\times\sfd_\infty)\)-continuous for all \(i\in\bar\N\).
\fr}\end{remark}
We begin with a preliminary approximation result, where the
given Lipschitz function is uniformly approximated on a compact
set and just the global Lipschitz constant is controlled.
\begin{lemma}\label{lem:approx_Lip}
Let \((\X,\sfd)\) be a metric space. Suppose there exists a sequence
\((\sfd_i)_{i\in\N}\) of distances on \(\X\) such that \(\sfd_i(x,y)\nearrow
\sfd(x,y)\) as \(i\to\infty\) for every \(x,y\in\X\). Let \(f\in\LIP_\sfd(\X)\)
be given. Then for any \(K\subseteq\X\) compact and \(\eps>0\) there exist
\(i\in\N\) and \(g\in\LIP_{\sfd_i}(\X)\) such that
\begin{subequations}\begin{align}\label{eq:approx_Lip_cl1}
\max_K|g-f|&\leq\eps,\\
\label{eq:approx_Lip_cl2}
\Lip_{\sfd_i}(g)&\leq\Lip_\sfd(f).
\end{align}\end{subequations}
\end{lemma}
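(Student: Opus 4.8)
The plan is to realise $g$ as a McShane-type extension taken \emph{with respect to the smaller distance} $\sfd_i$ of the restriction of $f$ to a finite $\sfd$-net of $K$. We may assume $K\neq\emptyset$ and $L\coloneqq\Lip_\sfd(f)>0$, since otherwise $g\coloneqq f$ (respectively, any constant) already works with an arbitrary index. Fix a parameter $\delta>0$, to be chosen at the end of order $\eps/L$. By compactness of $K$ in $(\X,\sfd)$, pick $x_1,\dots,x_N\in K$ with $K\subseteq\bigcup_{j=1}^N B_\delta^\sfd(x_j)$. Since $K\times K$ is $\tau(\sfd\times\sfd)$-compact and $\sfd_i\leq\sfd$, Remark \ref{rmk:unif_conv_cpt} yields an index $i\in\N$ such that $\sfd(x,y)-\sfd_i(x,y)\leq\delta$ for all $x,y\in K$. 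Then I would set
\[
g(x)\coloneqq\min_{1\leq j\leq N}\big(f(x_j)+L\,\sfd_i(x,x_j)\big),\qquad\text{ for every }x\in\X.
\]

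With this definition, inequality \eqref{eq:approx_Lip_cl2} is essentially free and uses nothing about the choice of $i$: each function $x\mapsto f(x_j)+L\,\sfd_i(x,x_j)$ is $\sfd_i$-Lipschitz with constant $\leq L$ by the triangle inequality for $\sfd_i$, and a minimum of finitely many functions with $\sfd_i$-Lipschitz constant $\leq L$ has again $\sfd_i$-Lipschitz constant $\leq L=\Lip_\sfd(f)$; in particular $g\in\LIP_{\sfd_i}(\X)$. The upper bound in \eqref{eq:approx_Lip_cl1} is likewise routine: given $x\in K$, picking $j$ with $\sfd(x,x_j)<\delta$ and using $\sfd_i\leq\sfd$ together with $\Lip_\sfd(f)=L$ gives $g(x)\leq f(x_j)+L\,\sfd(x,x_j)\leq f(x)+2L\delta$.

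The point that needs the hypotheses is the lower bound $g\geq f-\eps$ on $K$. Here I would fix $x\in K$ and note that for \emph{every} index $j$ the point $x_j$ lies in $K$, so the choice of $i$ gives $\sfd(x,x_j)\leq\sfd_i(x,x_j)+\delta$, whence
\[
f(x_j)+L\,\sfd_i(x,x_j)\ \geq\ f(x_j)+L\,\sfd(x,x_j)-L\delta\ \geq\ f(x)-L\delta,
\]
the last step being $f(x)-f(x_j)\leq L\,\sfd(x,x_j)$. Taking the minimum over $j$ yields $g(x)\geq f(x)-L\delta$, so altogether $|g-f|\leq 2L\delta$ on $K$, and choosing $\delta\coloneqq\eps/(2L)$ at the outset concludes. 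The main obstacle is precisely this lower estimate: a McShane extension built from the \emph{smaller} metric $\sfd_i$ could a priori fall far below $f$ at points of $K$ that are $\sfd_i$-far from the net, and the only thing preventing this is the uniform closeness of $\sfd_i$ to $\sfd$ on the whole of $K\times K$ afforded by Remark \ref{rmk:unif_conv_cpt} — which is essentially where the monotone convergence $\sfd_i\nearrow\sfd$ enters. By contrast, the global Lipschitz control \eqref{eq:approx_Lip_cl2} comes for free from the infimal-convolution structure of $g$.
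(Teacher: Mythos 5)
Your argument is correct and is essentially the paper's own approach: both build \(g\) as a McShane-type (cone) extension with respect to the approximating distance \(\sfd_i\) of the values of \(f\) at finitely many points of \(K\), which makes the bound \(\Lip_{\sfd_i}(g)\leq\Lip_\sfd(f)\) automatic, the only real work being the closeness to \(f\) on \(K\). The paper reaches the same conclusion via two successive monotone-convergence (Dini) steps with a dense sequence and a \(1/n\)-shift, whereas you use a finite \(\sfd\)-net plus the uniform bound \(\sfd-\sfd_i\leq\delta\) on \(K\times K\) from Remark \ref{rmk:unif_conv_cpt}; this is only a more quantitative bookkeeping of the same idea, and all steps check out (including the harmless swap of \(\min\) of upper cones for \(\max\) of lower cones).
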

\begin{proof}
Call \(L\coloneqq\Lip_\sfd(f)\) and fix a dense sequence \((x_j)_{j\in\N}\)
in \(K\). Given any \(n\in\N\), we define
\[
\tilde g_n(x)\coloneqq\big(-L\,\sfd(x,x_1)+f(x_1)\big)\vee\dots\vee
\big(-L\,\sfd(x,x_n)+f(x_n)\big)-\frac{1}{n},\quad\text{ for every }x\in\X.
\]
Note that \((\tilde g_n)_{n\in\N}\subseteq\LIP_\sfd(\X)\)
and \(\tilde g_n\leq\tilde g_{n+1}\leq f\) for all \(n\in\N\).
We claim that \(\tilde g_n(x)\to f(x)\) as \(n\to\infty\) for every \(x\in K\).
To prove it, fix \(x\in K\) and \(\delta>0\). Pick \(\bar n\in\N\) such that
\(1/\bar n\leq\delta\) and \(\sfd(x,x_{\bar n})\leq\delta\). Then for any
\(n\geq\bar n\) it holds that
\[
\tilde g_n(x)\geq -L\,\sfd(x,x_{\bar n})+f(x_{\bar n})-\frac{1}{n}
\geq f(x)-2L\,\sfd(x,x_{\bar n})-\frac{1}{\bar n}\geq f(x)-(2L+1)\delta,
\]
which grants that \(\tilde g_n(x)\nearrow f(x)\) by arbitrariness of \(\delta\).
Therefore, we have that \(\tilde g_n\to f\) uniformly on \(K\), so that there
exists \(n\in\N\) for which the function \(\tilde g\coloneqq\tilde g_n\) satisfies
\(|\tilde g-f|\leq\eps/2\) on \(K\). Given any \(i\in\N\), let us define the
function \(g_i\in\LIP_{\sfd_i}(\X)\) as
\[
g_i(x)\coloneqq\big(-L\,\sfd_i(x,x_1)+f(x_1)\big)\vee\dots\vee
\big(-L\,\sfd_i(x,x_n)+f(x_n)\big)-\frac{1}{n},\quad\text{ for every }x\in\X.
\]
Note that \(g_i\searrow\tilde g\) pointwise on \(K\), as a consequence
of the assumption \(\sfd_i\nearrow\sfd\). Since each \(g_i\) is
continuous with respect to \(\sfd\), we deduce that \(g_i\to\tilde g\)
uniformly on \(K\), thus for some \(i\in\N\) the function \(g\coloneqq g_i\)
satisfies \(|g-\tilde g|\leq\eps/2\) on \(K\). Hence, it holds that
\(|g-f|\leq\eps\) on \(K\), yielding \eqref{eq:approx_Lip_cl1}. Finally,
we have that \(\Lip_{\sfd_i}(g)\leq L=\Lip_\sfd(f)\) by construction,
whence \eqref{eq:approx_Lip_cl2} and accordingly the statement follow.
\end{proof}
By combining Lemma \ref{lem:approx_Lip} with a partition of
unity argument and the extension result in \cite{DMGP20},
we show that also the asymptotic slope can
be kept under control (in an integral sense).
\begin{proposition}\label{prop:approx_lip_a}
Let \((\X,\sfd,\mm)\) be a metric measure space.
Suppose to have a sequence \((\sfd_i)_{i\in\N}\) of distances on \(\X\)
such that \(\sfd_i(x,y)\nearrow\sfd(x,y)\) as \(i\to\infty\) for every \(x,y\in\X\)
and \(\tau(\sfd_i)=\tau(\sfd)\) for every \(i\in\N\). Fix an exponent
\(p\in(1,\infty)\) and a boundedly-supported function \(f\in\LIP_\sfd(\X)\).
Then for any \(\eps>0\) there exist \(i\in\N\) and \(g\in\LIP_{\sfd_i}(\X)\)
boundedly-supported such that
\begin{subequations}\begin{align}\label{eq:approx_lip_a_cl1}
\int|g-f|^p\,\d\mm&\leq\eps,\\
\label{eq:approx_lip_a_cl2}
\int\lip_a^{\sfd_i}(g)^p\,\d\mm&\leq\int\lip_a^\sfd(f)^p\,\d\mm+\eps.
\end{align}\end{subequations}
\end{proposition}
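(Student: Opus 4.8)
The plan is to glue together, by means of a partition of unity, local $\sfd_i$-Lipschitz approximations of $f$ built on the members of a sufficiently fine cover of (a large compact part of) the support. Since all the functionals in play are integrals against $\mm$, I would first reduce to a compact set: writing $L:=\Lip_\sfd(f)$ and fixing $x_0,R$ with $\mathrm{supp}(f)\subseteq\bar B_R^\sfd(x_0)$, the measure $\mm$ is finite on $\bar B_{R+2}^\sfd(x_0)$, hence Radon there, while $\lip_a^\sfd(f)\colon\X\to[0,L]$ is upper semicontinuous (its superlevel sets are $\tau(\sfd)$-closed). So by Lusin's theorem there is, for every $\eta>0$, a compact set $K\subseteq\bar B_{R+1}^\sfd(x_0)$ with $\mm\big(\bar B_{R+1}^\sfd(x_0)\setminus K\big)\le\eta$ on which $\lip_a^\sfd(f)$ is continuous, hence uniformly continuous. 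The contributions of $\bar B_{R+1}^\sfd(x_0)\setminus K$ and of its complement — where $f$ is either small in $L^p$ or vanishing, and where $\lip_a^\sfd(f)$ is correspondingly small or zero — will be handled at the end by crude bounds, provided the final function is boundedly supported with $\|g\|_\infty$ controlled a priori.

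Fix $\delta>0$. Using that $\Lip_\sfd\big(f;B_r^\sfd(x)\big)\downarrow\lip_a^\sfd(f)(x)$ as $r\downarrow0$, together with the uniform continuity of $\lip_a^\sfd(f)|_K$, for each $x\in K$ pick $r_x\in(0,\delta]$ so small that $\Lip_\sfd\big(f;B_{2r_x}^\sfd(x)\big)\le\lip_a^\sfd(f)(x)+\delta$ and $\big|\lip_a^\sfd(f)(y)-\lip_a^\sfd(f)(x)\big|\le\delta$ whenever $y\in K$ and $\sfd(x,y)\le2r_x$; extract a finite subcover $K\subseteq\bigcup_{k=1}^N B_k$, $B_k:=B_{r_k}^\sfd(x_k)$, and set $L_k:=\Lip_\sfd\big(f;B_{2r_k}^\sfd(x_k)\big)$, so that $L_k\le\lip_a^\sfd(f)(x)+2\delta$ for every $x\in K\cap\bar B_{r_k}^\sfd(x_k)$. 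For each $k$, extend $f|_{\bar B_{2r_k}^\sfd(x_k)}$ by the asymptotic-slope-preserving extension theorem of \cite{DMGP20} to $\tilde f_k\in\LIP_\sfd(\X)$ with $\tilde f_k=f$ on that ball and $\Lip_\sfd(\tilde f_k)=L_k$; apply Lemma \ref{lem:approx_Lip} to $\tilde f_k$ with the compact set $K\cap\bar B_{2r_k}^\sfd(x_k)$ and a small parameter $\eps'>0$, getting $i_k\in\N$ and (after a harmless truncation) $g_k\in\LIP_{\sfd_{i_k}}(\X)$ with $\|g_k\|_\infty\le\|f\|_\infty$, $|g_k-f|\le\eps'$ on $K\cap\bar B_{2r_k}^\sfd(x_k)$, and $\Lip_{\sfd_{i_k}}(g_k)\le L_k$. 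Put $i:=\max_k i_k$; by Remark \ref{rmk:ineq_dist} each $g_k\in\LIP_{\sfd_i}(\X)$ with $\Lip_{\sfd_i}(g_k)\le L_k$. Glue via a $\sfd_i$-Lipschitz partition of unity $\{\varphi_k\}_{k=1}^N$ subordinate to $\{B_k\}_k$ with $\sum_k\varphi_k\equiv1$ on a neighbourhood of $K$, adding (if needed) a global moderate approximation $h$ of $f$ (from Lemma \ref{lem:approx_Lip}, with $\Lip_{\sfd_i}(h)\le L$) multiplied by $1-\sum_k\varphi_k$, and finally capping off by a $\sfd_i$-Lipschitz cut-off $\theta$ supported in $B_{R+2}^\sfd(x_0)$ with $\theta\equiv1$ on $K$. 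The resulting $g$ lies in $\LIP_{\sfd_i}(\X)$, is boundedly supported, $\|g\|_\infty\le\|f\|_\infty$, and $|g-f|\le\eps'$ on $K$, which gives \eqref{eq:approx_lip_a_cl1} after adding the (small, by $\eta$) contribution of $\bar B_{R+1}^\sfd(x_0)\setminus K$ and the vanishing one outside.

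For \eqref{eq:approx_lip_a_cl2} the key estimate is the slope bound. Since $\tau(\sfd_i)=\tau(\sfd)$, one has $\lip_a^{\sfd_i}(g)(x)\le\inf_{\rho>0}\Lip_{\sfd_i}\big(g;B_\rho^\sfd(x)\big)$, so it suffices to bound $|g(x')-g(y')|$ for $x',y'$ in a small $\sfd$-ball around $x$. Writing $g(x')-g(y')=\sum_k\varphi_k(x')\big(g_k(x')-g_k(y')\big)+\sum_k\big(\varphi_k(x')-\varphi_k(y')\big)g_k(y')$, and, for $x$ in the plateau, using the cancellation $\sum_k\big(\varphi_k(x')-\varphi_k(y')\big)=0$ together with $|g_k-f|\le\eps'$ near $x$ to absorb the second sum, one obtains
\[
\lip_a^{\sfd_i}(g)(x)\le\max\{L_k:x\in\bar B_{r_k}^\sfd(x_k)\}+C\eps'\le\lip_a^\sfd(f)(x)+2\delta+C\eps'\qquad(x\in K),
\]
with $C$ depending only on the finitely many $\varphi_k$ (hence fixed once $i$ and the cover are). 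Choosing $\eps'$ last makes the last term negligible, and raising to the $p$-th power and integrating gives $\int_K\lip_a^{\sfd_i}(g)^p\,\d\mm\le\int_K\lip_a^\sfd(f)^p\,\d\mm+o_\delta(1)$. On $\bigcup_k B_k\setminus K$ the same computation bounds $\lip_a^{\sfd_i}(g)$ by a fixed multiple of $L$ (this is why the gluing must be made to $h$, not to $0$: otherwise the "seam'' would pick up $\|f\|_\infty$ times the large Lipschitz constants of the $\varphi_k$), over a set of $\mm$-measure controlled by $\eta$ and by the radius of the cover; on the collar $\{0<\theta<1\}$ one has $\lip_a^{\sfd_i}(g)\le\theta\,\lip_a^{\sfd_i}(g^*)+|g^*|\,\Lip_{\sfd_i}(\theta)$, whose $L^p$-integral is made small by first taking the $\mm$-measure of the collar small (it shrinks to $\mm(K)$ as the collar width does, $K$ being compact) and then $\eta,\eps'$ small accordingly. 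Choosing the parameters in the order: collar width, then $\eta$, then $\delta$, then $\eps'$ — all small in terms of $\eps$ — closes the argument.

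\textbf{Main obstacle.} The delicate point is the slope estimate for the glued function: the partition‑of‑unity functions $\varphi_k$ unavoidably have Lipschitz constants of order $\delta^{-1}$, so a bare product rule is useless; it is only the exact cancellation $\sum_k(\varphi_k(x')-\varphi_k(y'))=0$, combined with the fact that each local piece $g_k$ may be taken as close to $f$ (hence to the others on overlaps) as we wish \emph{after} $i$ has been fixed, that keeps $\lip_a^{\sfd_i}(g)$ comparable with $\max_k L_k$. What makes $\max_k L_k$ — and not $\Lip_\sfd(f)$ — the right quantity is the interplay of the upper semicontinuity of $\lip_a^\sfd(f)$ with its uniform continuity on the compact set $K$ (via Lusin), which forces each $L_k$ to be within $2\delta$ of the pointwise values of $\lip_a^\sfd(f)$ on $K$; and the extension theorem of \cite{DMGP20} is what produces, for each $k$, a global $\sfd$-Lipschitz extension of $f|_{\bar B_{2r_k}^\sfd(x_k)}$ whose Lipschitz constant equals exactly $L_k$, so that the ensuing application of Lemma \ref{lem:approx_Lip} inherits this sharp bound. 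A further, more technical difficulty is the bookkeeping required to keep $g$ boundedly supported while making \emph{all} the tail contributions on $\bar B_{R+1}^\sfd(x_0)\setminus K$, on $\bigcup_k B_k\setminus K$ and on the collar genuinely small — in particular, arranging that none of the crude bounds used there blows up as $\delta\to0$, using that $\lip_a^\sfd(f)\equiv0$ off $\bar B_R^\sfd(x_0)$.
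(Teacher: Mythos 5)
Your overall architecture (a Lusin/Egorov compact set \(K\) on which \(\lip_a^\sfd(f)\) is uniformly controlled, a fine cover of \(K\), local approximations from Lemma \ref{lem:approx_Lip} with constants \(L_k=\Lip_\sfd\big(f;B^\sfd_{2r_k}(x_k)\big)\), and a partition-of-unity gluing with the cancellation trick) matches the paper's up to the gluing step, but there is a genuine gap in how you control the glued function \emph{away from} \(K\), and it sits exactly where the paper's use of \cite[Theorem 1.1]{DMGP20} is essential (you invoke that theorem only to extend \(f\) off a ball, where a McShane extension already suffices, and miss its real role). The closeness \(|g_k-f|\le\eps'\) is available only on \(K\cap\bar B^\sfd_{2r_k}(x_k)\); at a seam point \(y'\notin K\) the best you can get, routing through \(x_k\) or the nearest point of \(K\), is \(|g_k(y')-f(y')|\lesssim\eps'+L\delta\), and likewise for your global approximation \(h\). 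Hence the cancellation bounds the seam term only by \(\big(\sum_k\Lip_{\sfd_i}(\varphi_k)\big)(\eps'+L\delta)\,\sfd_i(x',y')\). The \(\eps'\) part can be tuned away (as in the paper, by asking closeness \(\eps'/(N\,\Lip(\varphi_k))\)), but the \(L\delta\sum_k\Lip_{\sfd_i}(\varphi_k)\) part cannot: the \(\varphi_k\) have Lipschitz constants at least of order \(r_k^{-1}\ge\delta^{-1}\), and in a general metric measure space there is no bounded-overlap property, so off \(K\) you only obtain \(\lip_a^{\sfd_i}(g)\le C'\) with \(C'\) depending on the cover (in particular on \(N\)), not ``a fixed multiple of \(L\)'' as you claim. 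This wrecks the bookkeeping: the bad region (the seams \(\bigcup_k B_k\setminus K\) and the collar, all inside \(B\setminus K\)) has measure \(\le\eta\), but \(\eta\) is fixed by Lusin \emph{before} the cover, hence before \(C'\) is known, so \(\eta\,(C')^p\le\eps\) cannot be guaranteed, and your proposed order of parameters (collar width, \(\eta\), \(\delta\), \(\eps'\)) does not repair this; iterating does not help either, since refining the cover or changing \(K\) changes \(C'\) again.

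The paper avoids precisely this by never defining the glued function off \(K\): it sets \(\tilde h=\sum_j\psi_j h_j\) on \(K\) only, proves \(\Lip_{\tilde\sfd}(\tilde h)\le 5\,\Lip_\sfd(f)+\eps'\) and \(\lip_a^{\tilde\sfd}(\tilde h)\le\lip_a^\sfd(f)+2\eps'\) on \(K\) through a case analysis that uses the closeness of \(h_j\) to \(f\) and the uniform convergence \(\sfd_i\to\sfd\) on \(K\times K\) (both available only on \(K\)), and then applies the slope-preserving extension of \cite{DMGP20} to get a global \(\sfd_i\)-Lipschitz \(h\) with \(\lip_a^{\sfd_i}(h)=\lip_a^{\tilde\sfd}(\tilde h)\) at every point of \(K\) and \(\Lip_{\sfd_i}(h)\le 5\,\Lip_\sfd(f)+2\eps'\) --- a bound depending only on \(f\), fixed independently of the cover and of \(\eps'\), so that the contribution of \(B\setminus K\) is trivially of size \(\eps'\) times a constant built from \(\Lip_\sfd(f)\) and \(\sup_\X|f|\). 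Your plateau estimate on \(K\) itself is essentially sound (the step ``\(|g_k-f|\le\eps'\) near \(x\)'' must be repaired by routing through \(x\), which only adds an \(O(L\rho)\) error vanishing as \(\rho\to0\)), but without the extension theorem --- or some substitute yielding a cover-independent global Lipschitz bound together with unchanged asymptotic slopes at points of \(K\) --- the off-\(K\) estimate does not close, and the proof as written is incomplete.
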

\begin{proof}
First of all, fix a point \(\bar x\in\X\) and a radius \(R>0\) such
that \({\rm spt}(f)\subseteq B_R^{\sfd_1}(\bar x)\).
Denote by \(B\) the ball \(\bar B_{R+2}^{\sfd_1}(\bar x)\).
Moreover, fix any \(\eps'\in(0,1/4)\) such that
\begin{equation}\label{eq:approx_lip_a_aux0}
\bigg[\Big(3p\,\Lip_\sfd(f)^{p-1}+1\Big)\mm(B)+
\Big(15\,\Lip_\sfd(f)+\sup_\X|f|+7\Big)^p\bigg]\eps'
\leq\eps.
\end{equation}
{\color{blue}\textsc{Step 1: Construction of the auxiliary function \(\tilde h\).}}
Since \(\X\ni x\mapsto\Lip_\sfd\big(f;B_{1/n}^\sfd(x)\big)\)
is a Borel function for any \(n\in\N\) and
\(\lip_a^\sfd(f)(x)=\lim_{n\to\infty}\Lip_\sfd\big(f;B_{1/n}^\sfd(x)\big)\)
for every \(x\in\X\), by virtue of Egorov's theorem there exist \(K\subseteq B\)
compact and \(r>0\) with \(\mm(B\setminus K)\leq\eps'\) and
\begin{equation}\label{eq:approx_lip_a_aux1}
\Lip_\sfd\big(f;B_{4r}^\sfd(x)\big)\leq\lip_a^\sfd(f)(x)+\eps',
\quad\text{ for every }x\in K.
\end{equation}
Choose some points \(x_1,\ldots,x_k\in K\) for which
\(K\subseteq\bigcup_{j=1}^k B_r^\sfd(x_j)\). Fix a \(\sfd_1\)-Lipschitz partition
of unity \(\{\psi_1,\ldots,\psi_k\}\) of \(K\) subordinated to
\(\big\{K\cap B_r^\sfd(x_1),\ldots,K\cap B_r^\sfd(x_k)\big\}\).
Videlicet, each function \(\psi_j\colon K\to[0,1]\) is \(\sfd_1\)-Lipschitz,
satisfies \({\rm spt}(\psi_j)\subseteq K\cap B_r^\sfd(x_j)\), and
\(\sum_{j=1}^k\psi_j(x)=1\) for every \(x\in K\). Since \(\sfd_i\to\sfd\)
uniformly on \(K\times K\) (by Remark \ref{rmk:unif_conv_cpt}), there exists
\(i_0\in\N\) such that \(\sfd(x,y)\leq\sfd_i(x,y)+\eps'r\) for every
\(x,y\in K\) and \(i\geq i_0\). Given any \(j=1,\ldots,k\), pick some function
\(f_j\in\LIP_\sfd(\X)\) such that \(f_j|_{B_{2r}^\sfd(x_j)}=f|_{B_{2r}^\sfd(x_j)}\)
and \(\Lip_\sfd(f_j)=\Lip_\sfd\big(f;B_{2r}^\sfd(x_j)\big)\), thus we can find
(by Lemma \ref{lem:approx_Lip}) an index \(i(j)\geq i_0\) and a function
\(h_j\in\LIP_{\sfd_{i(j)}}(\X)\) such that
\begin{subequations}\begin{align}\label{eq:approx_lip_a_aux2}
\big|h_j(x)-f_j(x)\big|&\leq\frac{\eps'}{\big(k\,\Lip_{\sfd_1}(\psi_j)\big)\vee 1},
\quad\text{ for every }x\in K,\\
\label{eq:approx_lip_a_aux3}
\Lip_{\sfd_{i(j)}}(h_j)&\leq\Lip_\sfd(f_j)=\Lip_\sfd\big(f;B_{2r}^\sfd(x_j)\big).
\end{align}\end{subequations}
Let us denote \(i\coloneqq\max\big\{i(1),\ldots,i(k)\big\}\) and
\(\tilde\sfd\coloneqq\sfd_i|_{K\times K}\). Moreover, we define
\(\tilde h\colon K\to\R\) as
\[
\tilde h(x)\coloneqq\sum_{j=1}^k\psi_j(x)\,h_j(x),\quad\text{ for every }x\in K.
\]
{\color{blue}\textsc{Step 2: Estimates for the Lipschitz constant of \(\tilde h\).}}
We claim that \(\tilde h\in\LIP_{\tilde\sfd}(K)\) and that
\(\Lip_{\tilde\sfd}(\tilde h)\leq\eps'+5\,\Lip_\sfd(f)\).
In order to prove it, fix any \(y,z\in K\). Then we have that
\begin{equation}\begin{split}\label{eq:approx_lip_a_aux4}
\big|\tilde h(y)-\tilde h(z)\big|&\leq\bigg|\sum_{j=1}^k\psi_j(y)
\big(h_j(y)-h_j(z)\big)\bigg|+\bigg|\sum_{j=1}^k\big(\psi_j(y)-\psi_j(z)\big)
\big(h_j(z)-f(z)\big)\bigg|\\
&\leq\sum_{j=1}^k\psi_j(y)\big|h_j(y)-h_j(z)\big|+
\sum_{j=1}^k\big|\psi_j(y)-\psi_j(z)\big|\big|h_j(z)-f(z)\big|.
\end{split}\end{equation}
Observe that the first term in the second line of the above formula can be estimated as
\begin{equation}\label{eq:approx_lip_a_aux5}
\sum_{j=1}^k\psi_j(y)\big|h_j(y)-h_j(z)\big|\leq
\sum_{j=1}^k\psi_j(y)\,\Lip_{\sfd_{i(j)}}(h_j)\,\sfd_{i(j)}(y,z)
\overset{\eqref{eq:approx_lip_a_aux3}}\leq\Lip_\sfd(f)\,\sfd_i(y,z).
\end{equation}
In order to estimate the second term in \eqref{eq:approx_lip_a_aux4},
fix \(j=1,\ldots,k\). We consider three cases:
\begin{itemize}
\item[\(\rm i)\)] If \(z\in B_{2r}^\sfd(x_j)\), then \(f(z)=f_j(z)\) and
accordingly
\[
\big|\psi_j(y)-\psi_j(z)\big|\big|h_j(z)-f(z)\big|
\overset{\eqref{eq:approx_lip_a_aux2}}\leq
\Lip_{\sfd_i}(\psi_j)\,\sfd_i(y,z)\,\frac{\eps'}{k\,\Lip_{\sfd_1}(\psi_j)}
\leq\frac{\eps'}{k}\,\sfd_i(y,z).
\]
\item[\(\rm ii)\)] If \(z\notin B_{2r}^\sfd(x_j)\) and \(y\in B_r^\sfd(x_j)\),
then \(f_j(y)=f(y)\) and \(\sfd(y,z)>r\). In particular,
\begin{equation}\label{eq:approx_lip_a_aux5bis}
\frac{\sfd(y,z)}{\sfd_i(y,z)}\leq\frac{\sfd_i(y,z)+\eps' r}{\sfd_i(y,z)}
\leq 1+\frac{\eps' r}{\sfd(y,z)-\eps' r}<1+\frac{\eps'}{1-\eps'}<2,
\end{equation}
whence it follows that
\[\begin{split}
&\big|\psi_j(y)-\psi_j(z)\big|\big|h_j(z)-f(z)\big|\\
\overset{\phantom{\eqref{eq:approx_lip_a_aux3}}}\leq\,
&\big|\psi_j(y)-\psi_j(z)\big|\Big[\big|h_j(z)-f_j(z)\big|
+\big|f_j(z)-f_j(y)\big|+\big|f_j(y)-f(z)\big|\Big]\\
\overset{\phantom{\eqref{eq:approx_lip_a_aux3}}}\leq\,
&\Lip_{\sfd_i}(\psi_j)\,\sfd_i(y,z)\,
\big|h_j(z)-f_j(z)\big|+\psi_j(y)\,\big|f_j(z)-f_j(y)\big|
+\psi_j(y)\,\big|f(y)-f(z)\big|\\
\overset{\eqref{eq:approx_lip_a_aux3}}\leq\,
&\Lip_{\sfd_i}(\psi_j)\,\sfd_i(y,z)\,\frac{\eps'}
{k\,\Lip_{\sfd_1}(\psi_j)}+\psi_j(y)\,\Lip_\sfd(f_j)\,\sfd(y,z)
+\psi_j(y)\,\Lip_\sfd(f)\,\sfd(y,z)\\
\overset{\phantom{\eqref{eq:approx_lip_a_aux3}}}\leq\,
&\frac{\eps'}{k}\,\sfd_i(y,z)+2\,\psi_j(y)\,\Lip_\sfd(f)\,\sfd(y,z)
\overset{\eqref{eq:approx_lip_a_aux5bis}}\leq
\bigg(\frac{\eps'}{k}+4\,\psi_j(y)\,\Lip_\sfd(f)\bigg)\sfd_i(y,z).
\end{split}\]
\item[\(\rm iii)\)] If \(z\notin B_{2r}^\sfd(x_j)\) and \(y\notin B_r^\sfd(x_j)\),
then trivially \(\big|\psi_j(y)-\psi_j(z)\big|\big|h_j(z)-f(z)\big|=0\).
\end{itemize}
By combining the estimates we obtained in \(\rm i)\), \(\rm ii)\), \(\rm iii)\)
with \eqref{eq:approx_lip_a_aux5} and \eqref{eq:approx_lip_a_aux4}, we deduce that
\[
\big|\tilde h(y)-\tilde h(z)\big|\leq\big(\eps'+5\,\Lip_\sfd(f)\big)\sfd_i(y,z),
\quad\text{ for every }y,z\in K.
\]
This proves that \(\tilde h\in\LIP_{\tilde\sfd}(K)\) and
\(\Lip_{\tilde\sfd}(\tilde h)\leq\eps'+5\,\Lip_\sfd(f)\),
yielding the sought conclusion.\\
{\color{blue}\textsc{Step 3: Estimates for the asymptotic slope of \(\tilde h\).}}
Next we claim that
\begin{equation}\label{eq:approx_lip_a_aux6}
\lip_a^{\tilde\sfd}(\tilde h)(x)\leq\lip_a^\sfd(f)(x)+2\eps',\quad\text{ for every }x\in K.
\end{equation}
To prove it, fix any \(\delta<\eps' r\) and \(y,z\in B_\delta^{\tilde\sfd}(x)\).
Define \(F\coloneqq\big\{j=1,\ldots,k\,:\,\sfd(x,x_j)<3r/2\big\}\).
If \(j\notin F\), then \(y,z\notin B_r^\sfd(x_j)\) and thus
\(\psi_j(y)=\psi_j(z)=0\), as it is granted by the estimates
\[
\sfd(y,x_j)\geq\sfd(x,x_j)-\sfd(x,y)\geq\frac{3r}{2}-\sfd_i(x,y)-\eps' r
>\bigg(\frac{3}{2}-\eps'\bigg)r-\delta>\bigg(\frac{3}{2}-2\eps'\bigg)r>r,
\]
and similarly for \(\sfd(z,x_j)\). If \(j\in F\), then \(B_{2r}^\sfd(x_j)
\subseteq B_{4r}^\sfd(x)\) and \(f_j(z)=f(z)\). The latter claim follows from
the fact that \(z\in B_{2r}^\sfd(x_j)\), which is granted by the estimates
\[
\sfd(z,x_j)\leq\sfd(z,x)+\sfd(x,x_j)<\sfd_i(z,x)+\eps' r+\frac{3r}{2}
<\delta+\bigg(\eps'+\frac{3}{2}\bigg)r<\bigg(2\eps'+\frac{3}{2}\bigg)r<2r.
\]
Therefore, by using \eqref{eq:approx_lip_a_aux4} and the above considerations,
we obtain that
\[\begin{split}
\big|\tilde h(y)-\tilde h(z)\big|&\overset{\eqref{eq:approx_lip_a_aux2}}\leq
\sum_{j\in F}\psi_j(y)\,\Lip_{\sfd_{i(j)}}(h_j)\,\sfd_{i(j)}(y,z)+
\sum_{j\in F}\Lip_{\sfd_i}(\psi_j)\,\sfd_i(y,z)\,\frac{\eps'}{k\,\Lip_{\sfd_1}(\psi_j)}\\
&\overset{\eqref{eq:approx_lip_a_aux3}}\leq
\bigg[\sum_{j\in F}\psi_j(y)\,\Lip_\sfd\big(f;B_{2r}^\sfd(x_j)\big)
+\eps'\bigg]\sfd_i(y,z)\\
&\overset{\phantom{\eqref{eq:approx_lip_a_aux3}}}\leq
\Big[\Lip_\sfd\big(f;B_{4r}^\sfd(x)\big)+\eps'\Big]\sfd_i(y,z)
\overset{\eqref{eq:approx_lip_a_aux1}}\leq\big[\lip_a^\sfd(f)(x)+2\eps'\big]\sfd_i(y,z).
\end{split}\]
Thanks to the arbitrariness of \(y,z\in B_\delta^{\tilde\sfd}(x)\), we deduce
that \(\Lip_{\tilde\sfd}\big(\tilde h;B_\delta^{\tilde\sfd}(x)\big)\leq
\lip_a^\sfd(f)(x)+2\eps'\), whence by letting \(\delta\searrow 0\) we can
finally conclude that the inequality in \eqref{eq:approx_lip_a_aux6} is verified.\\
{\color{blue}\textsc{Step 4: Construction of the function \(g\).}}
Given any point \(x\in K\), it holds that
\begin{equation}\label{eq:approx_lip_a_aux7}
\big|\tilde h(x)-f(x)\big|\leq\sum_{j=1}^k\psi_j(x)\big|h_j(x)-f(x)\big|=\sum_{j=1}^k
\psi_j(x)\big|h_j(x)-f_j(x)\big|\overset{\eqref{eq:approx_lip_a_aux2}}\leq\eps'.
\end{equation}
In particular, we have that \(\sup_K|\tilde h|\leq\sup_\X|f|+1\). Recall also that
\(\Lip_{\tilde\sfd}(\tilde h)\leq 5\,\Lip_\sfd(f)+\eps'\), as proven in
\textsc{Step 2}. Therefore, by applying \cite[Theorem 1.1]{DMGP20}
we can find a function \(h\in\LIP_{\sfd_i}(\X)\) with
\(h|_K=\tilde h\) such that
\(\lip_a^{\sfd_i}(h)(x)=\lip_a^{\tilde\sfd}(\tilde h)(x)\)
for every \(x\in K\) and
\begin{equation}\label{eq:approx_lip_a_aux8}
\Lip_{\sfd_i}(h)\leq\Lip_{\tilde\sfd}(\tilde h)+\eps'
\leq 5\,\Lip_\sfd(f)+2\eps'\eqqcolon C.
\end{equation}
Define \(G\coloneqq\big\{x\in\X\,:\,\sfd_i(x,{\rm spt}(f)\cap K)\leq 2\big\}\)
and observe that \(\sup_G|h|\leq 2C+\sup_\X|f|+1\).
Indeed, given any point \(x\in G\), one has that
\[\begin{split}
\big|h(x)\big|&
\overset{\phantom{\eqref{eq:approx_lip_a_aux8}}}\leq
\inf_{y\in{\rm spt}(f)\cap K}\Big[\big|h(x)-h(y)\big|+\big|h(y)\big|\Big]\leq
\Lip_{\sfd_i}(h)\inf_{y\in{\rm spt}(f)\cap K}\sfd_i(x,y)+\sup_K|h|\\
&\overset{\eqref{eq:approx_lip_a_aux8}}\leq
C\,\sfd_i(x,{\rm spt}(f)\cap K)+\sup_K|\tilde h|\leq 2C+\sup_\X|f|+1.
\end{split}\]
Moreover, we have that \(G\subseteq B=\bar B_{R+2}^{\sfd_1}(\bar x)\).
Indeed, by using that \({\rm spt}(f)\subseteq B_R^{\sfd_1}(\bar x)\), we get
\[
\sfd_1(x,\bar x)\leq\inf_{y\in{\rm spt}(f)\cap K}\big[\sfd_1(x,y)
+\sfd_1(y,\bar x)\big]\leq\inf_{y\in{\rm spt}(f)\cap K}\sfd_i(x,y)+R
\leq R+2,
\]
for every \(x\in G\). Let us now define the
\(\sfd_i\)-Lipschitz cut-off function \(\eta\colon\X\to[0,1]\) as
\[
\eta(x)\coloneqq\Big(\big(2-\sfd_i(x,{\rm spt}(f)\cap K)\big)
\wedge 1\Big)\vee 0,\quad\text{ for every }x\in\X.
\]
It holds that \(\eta=1\) on a neighbourhood of \({\rm spt}(f)\cap K\)
and that \(\Lip_{\sfd_i}(\eta)\leq 1\). Given that \(\eta=0\) in
\(\X\setminus G\), it also holds that \({\rm spt}(\eta)\subseteq G\).
We then define the function \(g\colon\X\to\R\) as
\(g\coloneqq\eta h\).\\
{\color{blue}\textsc{Step 5: Conclusion.}}
Note that \(g\in\LIP_{\sfd_i}(\X)\), \({\rm spt}(g)\subseteq G\),
and \(\sup_\X|g|\leq 2C+\sup_\X|f|+1\). Let us estimate
\(\Lip_{\sfd_i}(g)\). Since \(\big|g(x)-g(y)\big|\leq\eta(x)
\big|h(x)-h(y)\big|+\big|\eta(x)-\eta(y)\big|\big|h(y)\big|\) holds
for every \(x,y\in\X\), we obtain that \(\big|g(x)-g(y)\big|\leq
\big(C+\sup_G|h|\big)\sfd_i(x,y)\) whenever \(y\in G\), whence it
follows that \(\Lip_{\sfd_i}(g)\leq 3C+\sup_\X|f|+1\).
The same computations give
\begin{equation}\label{eq:approx_lip_a_aux9}
\Lip_{\sfd_i}(g;E)\leq\Lip_{\sfd_i}(h;E)+\sup_E|h|,
\quad\text{ for every }E\subseteq\X.
\end{equation}
On the one hand, since \(g\) and \(h\) agree on a neighbourhood
of \({\rm spt}(f)\cap K\), for any \(x\in{\rm spt}(f)\cap K\)
we have that \(\big|g(x)-f(x)\big|\leq\eps'\) by \eqref{eq:approx_lip_a_aux7}
and \(\lip_a^{\sfd_i}(g)(x)\leq\lip_a^\sfd(f)(x)+2\eps'\)
by \eqref{eq:approx_lip_a_aux6}. On the other hand, if
\(x\in K\setminus{\rm spt}(f)\), then \(f(x)=\lip_a^\sfd(f)(x)=0\),
thus accordingly we can deduce from \eqref{eq:approx_lip_a_aux7} that
\(\big|g(x)-f(x)\big|=\eta(x)\big|h(x)\big|\leq\eps'\),
while \eqref{eq:approx_lip_a_aux6}, \eqref{eq:approx_lip_a_aux7},
and \eqref{eq:approx_lip_a_aux9} ensure that
\[\begin{split}
\lip_a^{\sfd_i}(g)(x)&=\lim_{\delta\searrow 0}\Lip_{\sfd_i}
\big(g;B_\delta^{\sfd_i}(x)\big)\leq\lim_{\delta\searrow 0}
\Lip_{\sfd_i}\big(h;B_\delta^{\sfd_i}(x)\big)+
\lim_{\delta\searrow 0}\sup_{B_\delta^{\sfd_i}(x)}|h|\\
&=\lip_a^{\sfd_i}(h)(x)+\big|h(x)\big|\leq 3\eps'.
\end{split}\]
All in all, we have shown that
\begin{subequations}\begin{align}\label{eq:approx_lip_a_aux10a}
\big|g(x)-f(x)\big|&\leq\left\{\begin{array}{ll}
\eps',\\
2C+\sup_\X|f|+1,
\end{array}\quad\begin{array}{ll}
\text{ if }x\in K,\\
\text{ if }x\in\X\setminus K,
\end{array}\right.\\\label{eq:approx_lip_a_aux10b}
\lip_a^{\sfd_i}(g)(x)&\leq\left\{\begin{array}{ll}
\lip_a^\sfd(f)(x)+3\eps',\\
3C+\sup_\X|f|+1,
\end{array}\quad\begin{array}{ll}
\text{ if }x\in K,\\
\text{ if }x\in\X\setminus K.
\end{array}\right.
\end{align}\end{subequations}
It remains to check that \(g\) satisfies \eqref{eq:approx_lip_a_cl1}
and \eqref{eq:approx_lip_a_cl2}. Recall that
\({\rm spt}(f),{\rm spt}(g)\subseteq B\). Then
\[\begin{split}
\int|g-f|^p\,\d\mm&
\overset{\phantom{\eqref{eq:approx_lip_a_aux10a}}}=
\int_K|g-f|^p\,\d\mm+\int_{B\setminus K}|g-f|^p\,\d\mm\\
&\overset{\eqref{eq:approx_lip_a_aux10a}}\leq
\mm(K)\,(\eps')^p+\mm(B\setminus K)\Big(2C+\sup_\X|f|+1\Big)^p\\
&\overset{\phantom{\eqref{eq:approx_lip_a_aux10a}}}\leq
\bigg[\mm(B)+\Big(2C+\sup_\X|f|+1\Big)^p\bigg]\eps'.
\end{split}\]
Moreover, it holds that
\[\begin{split}
\int\lip_a^{\sfd_i}(g)^p\,\d\mm
&\overset{\phantom{\eqref{eq:approx_lip_a_aux10b}}}=
\int_K\lip_a^{\sfd_i}(g)^p\,\d\mm+\int_{B\setminus K}
\lip_a^{\sfd_i}(g)^p\,\d\mm\\
&\overset{\eqref{eq:approx_lip_a_aux10b}}\leq
\int_K\big(\lip_a^\sfd(f)+3\eps'\big)^p\,\d\mm+
\mm(B\setminus K)\Big(3C+\sup_\X|f|+1\Big)^p\\
&\overset{\phantom{\eqref{eq:approx_lip_a_aux10b}}}\leq
\int\lip_a^\sfd(f)^p\,\d\mm+3p\eps'\int_B\lip_a^\sfd(f)^{p-1}\,\d\mm
+\Big(3C+\sup_\X|f|+1\Big)^p\eps'\\
&\overset{\phantom{\eqref{eq:approx_lip_a_aux10b}}}\leq
\int\lip_a^\sfd(f)^p\,\d\mm+
\bigg[3p\,\Lip_\sfd(f)^{p-1}\mm(B)+\Big(3C+\sup_\X|f|+1\Big)^p\bigg]\eps'.
\end{split}\]
By taking \eqref{eq:approx_lip_a_aux0} into account, we can finally conclude
that \eqref{eq:approx_lip_a_cl1} and \eqref{eq:approx_lip_a_cl2} are verified.
\end{proof}
\section{Mosco-convergence of Cheeger energies}
By applying Proposition \ref{prop:approx_lip_a},
we can easily obtain our main \(\Gamma\)-convergence result.
\begin{theorem}\label{thm:main_Mosco-conv}
Let \((\X,\sfd,\mm)\) be a metric measure space. Let \((\sfd_i)_{i\in\N}\)
be a sequence of complete distances on \(\X\) such that \(\sfd_i\nearrow\sfd\)
as \(i\to\infty\). Suppose \(\tau(\sfd_i)=\tau(\sfd)\) for all \(i\in\N\).
Fix \(p\in(1,\infty)\). Then \(\E_{\Ch,p}^{\sfd_i}\) Mosco-converges
to \(\E_{\Ch,p}^\sfd\) as \(i\to\infty\). Videlicet, the following
properties hold:
\begin{itemize}
\item[\(\rm i)\)] \textsc{Weak \(\Gamma\)-lim inf.}
If \((f_i)_{i\in\N}\subseteq L^p(\mm)\) weakly converges to \(f\in L^p(\mm)\),
then it holds
\[
\E_{\Ch,p}^\sfd(f)\leq\limi_{i\to\infty}\E_{\Ch,p}^{\sfd_i}(f_i).
\]
\item[\(\rm ii)\)] \textsc{Strong \(\Gamma\)-lim sup.}
Given any \(f\in L^p(\mm)\), there exists a sequence
\((f_i)_{i\in\N}\subseteq L^p(\mm)\) that strongly converges to \(f\) and satisfies
\[
\E_{\Ch,p}^\sfd(f)\geq\lims_{i\to\infty}\E_{\Ch,p}^{\sfd_i}(f_i).
\]
\end{itemize}
\end{theorem}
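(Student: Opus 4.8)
The plan is to derive the Mosco-convergence directly from the monotonicity inequality in Remark~\ref{rmk:ineq_ener} together with the approximation result in Proposition~\ref{prop:approx_lip_a}. First I would observe that since $\sfd_i\leq\sfd_j$ for $i\leq j$ and both are dominated by $\sfd$, inequality~\eqref{eq:ineq_ener} gives a monotone chain $\E_{\Ch,p}^{\sfd_i}\leq\E_{\Ch,p}^{\sfd_j}\leq\E_{\Ch,p}^\sfd$ for $i\leq j$; in particular the limit $\lim_i\E_{\Ch,p}^{\sfd_i}(f)=\sup_i\E_{\Ch,p}^{\sfd_i}(f)$ exists in $[0,+\infty]$ for every $f\in L^p(\mm)$, and is always $\leq\E_{\Ch,p}^\sfd(f)$.

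For the weak $\Gamma$-$\liminf$ inequality, part i), I would let $f_i\rightharpoonup f$ weakly in $L^p(\mm)$ and fix any $i_0\in\N$. For $i\geq i_0$ monotonicity yields $\E_{\Ch,p}^{\sfd_{i_0}}(f_i)\leq\E_{\Ch,p}^{\sfd_i}(f_i)$, so that $\E_{\Ch,p}^{\sfd_{i_0}}(f)\leq\limi_i\E_{\Ch,p}^{\sfd_{i_0}}(f_i)\leq\limi_i\E_{\Ch,p}^{\sfd_i}(f_i)$, where the first step uses the weak lower semicontinuity of the Cheeger energy $\E_{\Ch,p}^{\sfd_{i_0}}$ recorded in the Preliminaries. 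Letting $i_0\to\infty$ reduces the claim to showing $\E_{\Ch,p}^\sfd(f)\leq\sup_{i_0}\E_{\Ch,p}^{\sfd_{i_0}}(f)=\lim_{i_0}\E_{\Ch,p}^{\sfd_{i_0}}(f)$, i.e.\ to the reverse of the trivial monotonicity inequality, evaluated along the constant sequence; this is exactly the content that the $\Gamma$-$\limsup$ bound will supply, so I would actually prove ii) first and then invoke it here with $f_i\equiv f$. Concretely, combining ii) (applied to $f$) with the weak lower semicontinuity argument just given shows $\E_{\Ch,p}^\sfd(f)\leq\limi_i\E_{\Ch,p}^{\sfd_i}(f)\leq\lims_i\E_{\Ch,p}^{\sfd_i}(f)\leq\E_{\Ch,p}^\sfd(f)$, hence $\E_{\Ch,p}^{\sfd_i}(f)\to\E_{\Ch,p}^\sfd(f)$, which closes part i).

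For the strong $\Gamma$-$\limsup$ inequality, part ii), I may assume $\E_{\Ch,p}^\sfd(f)<+\infty$, otherwise any recovery sequence converging strongly to $f$ works. By definition of the Cheeger energy as the $L^p$-relaxation of $\E_{a,p}^\sfd$, for each $n\in\N$ there is a boundedly-supported $\varphi_n\in\LIP_\sfd(\X)$ with $\|\varphi_n-f\|_{L^p(\mm)}\leq 1/n$ and $\frac1p\int\lip_a^\sfd(\varphi_n)^p\,\d\mm\leq\E_{\Ch,p}^\sfd(f)+1/n$. Now I would apply Proposition~\ref{prop:approx_lip_a} to each $\varphi_n$ with $\eps=1/n$: this produces an index $i(n)\in\N$ and a boundedly-supported $\psi_n\in\LIP_{\sfd_{i(n)}}(\X)$ with $\int|\psi_n-\varphi_n|^p\,\d\mm\leq1/n$ and $\int\lip_a^{\sfd_{i(n)}}(\psi_n)^p\,\d\mm\leq\int\lip_a^\sfd(\varphi_n)^p\,\d\mm+1/n$. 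Without loss of generality $i(n)\nearrow\infty$ (after passing to a subsequence and relabelling, using monotonicity of the energies to keep the estimate $\E_{\Ch,p}^{\sfd_{i}}\leq\E_{\Ch,p}^{\sfd_{i(n)}}$ for $i\le i(n)$). I then define the recovery sequence $f_i$ by setting $f_i\coloneqq\psi_n$ for $i(n)\leq i<i(n+1)$ (and, say, $f_i\coloneqq\psi_1$ for $i<i(1)$). Then $f_i\to f$ strongly in $L^p(\mm)$ because $\|\psi_n-f\|_{L^p(\mm)}\leq\|\psi_n-\varphi_n\|_{L^p(\mm)}+\|\varphi_n-f\|_{L^p(\mm)}\to0$, and for $i(n)\leq i<i(n+1)$ the bounds $\E_{\Ch,p}^{\sfd_i}(f_i)\leq\E_{\Ch,p}^{\sfd_{i(n)}}(\psi_n)\leq\E_{a,p}^{\sfd_{i(n)}}(\psi_n)=\frac1p\int\lip_a^{\sfd_{i(n)}}(\psi_n)^p\,\d\mm\leq\E_{\Ch,p}^\sfd(f)+3/(np)$ give $\lims_i\E_{\Ch,p}^{\sfd_i}(f_i)\leq\E_{\Ch,p}^\sfd(f)$, as required. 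The only genuinely nontrivial ingredient is Proposition~\ref{prop:approx_lip_a}, which has already been established; the rest is bookkeeping with the monotonicity inequality and the definitions, so I do not expect any serious obstacle here.
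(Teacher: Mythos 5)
Your treatment of part ii) is essentially the paper's own argument and is sound: a diagonal choice of boundedly-supported \(\sfd\)-Lipschitz functions \(\varphi_n\) almost realising \(\E_{\Ch,p}^\sfd(f)\), Proposition \ref{prop:approx_lip_a} with \(\eps=1/n\), arranging \(i(n)\) increasing, and the blockwise recovery sequence. (Two small remarks: to make \(i(n)\) increasing you should \emph{enlarge} the indices rather than ``pass to a subsequence'' -- a strictly increasing subsequence need not exist if \(i(n)\) is bounded -- and this enlargement is legitimate because \(\LIP_{\sfd_i}(\X)\subseteq\LIP_{\sfd_{i'}}(\X)\) and \(\lip_a^{\sfd_{i'}}(g)\leq\lip_a^{\sfd_i}(g)\) for \(i'\geq i\); also the constant \(3/(np)\) should be \(1/n+1/(pn)\), which is harmless.)

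The genuine problem is in part i): you have the monotonicity of the energies backwards. By Remark \ref{rmk:ineq_ener} (i.e.\ \eqref{eq:ineq_ener}), a \emph{larger} distance produces a \emph{smaller} energy, since difference quotients and asymptotic slopes decrease; as \(\sfd_i\leq\sfd_j\leq\sfd\) for \(i\leq j\), the correct chain is \(\E_{\Ch,p}^{\sfd}\leq\E_{\Ch,p}^{\sfd_j}\leq\E_{\Ch,p}^{\sfd_i}\), not \(\E_{\Ch,p}^{\sfd_i}\leq\E_{\Ch,p}^{\sfd_j}\leq\E_{\Ch,p}^{\sfd}\). Hence the key step of your argument for i), namely \(\E_{\Ch,p}^{\sfd_{i_0}}(f_i)\leq\E_{\Ch,p}^{\sfd_i}(f_i)\) for \(i\geq i_0\), is false (the reverse inequality holds), and the way you propose to close the resulting gap does not work either: item ii) provides \emph{some} recovery sequence, not an estimate along the constant sequence \(f_i\equiv f\), and Mosco convergence does not yield the pointwise convergence \(\E_{\Ch,p}^{\sfd_i}(f)\to\E_{\Ch,p}^{\sfd}(f)\) you invoke (for a decreasing sequence of functionals the pointwise limit can exceed the \(\Gamma\)-limit). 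Curiously, in your estimate for ii) you do use the monotonicity in the correct direction, \(\E_{\Ch,p}^{\sfd_i}\leq\E_{\Ch,p}^{\sfd_{i(n)}}\) for \(i\geq i(n)\), contradicting your stated chain. Once the direction is fixed, i) is immediate and needs neither the frozen index \(i_0\) nor ii): by weak lower semicontinuity of \(\E_{\Ch,p}^{\sfd}\) and \(\E_{\Ch,p}^{\sfd}\leq\E_{\Ch,p}^{\sfd_i}\), one has \(\E_{\Ch,p}^\sfd(f)\leq\limi_i\E_{\Ch,p}^\sfd(f_i)\leq\limi_i\E_{\Ch,p}^{\sfd_i}(f_i)\), which is exactly the paper's two-line proof.
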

\begin{proof}
Item \(\rm i)\) can be easily proven: given any \(f\in L^p(\mm)\) and
\((f_i)_{i\in\N}\subseteq L^p(\mm)\) with \(f_i\rightharpoonup f\)
weakly in \(L^p(\mm)\), the weak lower semicontinuity of
\(\E_{\Ch,p}^\sfd\colon L^p(\mm)\to[0,+\infty]\) grants that
\[
\E_{\Ch,p}^\sfd(f)\leq\limi_{i\to\infty}\E_{\Ch,p}^\sfd(f_i)
\overset{\eqref{eq:ineq_ener}}\leq\limi_{i\to\infty}\E_{\Ch,p}^{\sfd_i}(f_i).
\]
Let us then pass to the verification of item \(\rm ii)\). Let \(f\in L^p(\mm)\)
be given. If \(f\notin W^{1,p}(\X)\), then \(\E_{\Ch,p}^\sfd(f)=+\infty\)
and accordingly the \(\Gamma\)-lim sup inequality is trivially verified (by
taking, for instance, \(f_i\coloneqq f\) for every \(i\in\N\)). Now suppose
\(f\in W^{1,p}(\X)\). By definition of \(\E_{\Ch,p}^\sfd\),
we can find a sequence
\((\tilde f_n)_n\subseteq\LIP_\sfd(\X)\) of boundedly-supported
functions such that \(\tilde f_n\to f\) strongly in \(L^p(\mm)\)
and \(\E_{\Ch,p}^\sfd(f)=\lim_n\E_{a,p}^\sfd(\tilde f_n)\).
By Proposition \ref{prop:approx_lip_a}, we can find
\(\iota\colon\N\to\N\) increasing and a sequence \((g_n)_n\) of boundedly-supported
functions \(g_n\in\LIP_{\sfd_{\iota(n)}}(\X)\) such that
\[
\int|g_n-\tilde f_n|^p\,\d\mm\leq\frac{1}{n},\qquad
\E_{a,p}^{\sfd_{\iota(n)}}(g_n)\leq\E_{a,p}^\sfd(\tilde f_n)+\frac{1}{n}.
\]
In particular, \(g_n\to f\) strongly in \(L^p(\mm)\) and
\(\E_{\Ch,p}^\sfd(f)\geq\lims_n\E_{a,p}^{\sfd_{\iota(n)}}(g_n)
\geq\lims_n\E_{\Ch,p}^{\sfd_{\iota(n)}}(g_n)\). Finally, we define
the recovery sequence \((f_i)_i\subseteq L^p(\mm)\) in the following way:
\[
f_i\coloneqq g_n,\quad\text{ for every }n\in\N\text{ and }
i\in\big\{\iota(n),\ldots,\iota(n+1)-1\big\}.
\]
Notice that \(f_i\to f\) strongly in \(L^p(\mm)\). Moreover, Remark
\ref{rmk:ineq_ener} grants that \(\E_{\Ch,p}^{\sfd_i}(f_i)\leq
\E_{\Ch,p}^{\sfd_{\iota(n)}}(g_n)\) whenever \(\iota(n)\leq i<\iota(n+1)\),
which implies that \(\lims_i\E_{\Ch,p}^{\sfd_i}(f_i)=
\lims_n\E_{\Ch,p}^{\sfd_{\iota(n)}}(g_n)\leq\E_{\Ch,p}^\sfd(f)\).
This gives the \(\Gamma\)-lim sup inequality, thus accordingly
the statement is achieved.
\end{proof}
It readily follows from Theorem \ref{thm:main_Mosco-conv} that
the infinitesimal Hilbertianity condition is stable under taking
increasing limits of the distances (while keeping the measure fixed).
Videlicet:
\begin{corollary}\label{cor:stab_iH}
Let \((\X,\sfd,\mm)\) be a metric measure space. Let \((\sfd_i)_{i\in\N}\)
be a sequence of complete distances on \(\X\) such that \(\sfd_i\nearrow\sfd\) as
\(i\to\infty\) and \(\tau(\sfd_i)=\tau(\sfd)\) for every \(i\in\N\).
Suppose \((\X,\sfd_i,\mm)\) is infinitesimally Hilbertian for every \(i\in\N\).
Then \((\X,\sfd,\mm)\) is infinitesimally Hilbertian.
\end{corollary}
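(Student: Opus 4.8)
The plan is to invoke Theorem \ref{thm:main_Mosco-conv} with the exponent \(p=2\), which yields that \(\E_{\Ch,2}^{\sfd_i}\) Mosco-converges to \(\E_{\Ch,2}^\sfd\), and then to check that the Mosco limit of quadratic forms is again a quadratic form. Recall that \((\X,\sfd,\mm)\) is infinitesimally Hilbertian precisely when \(\E_{\Ch,2}^\sfd\) is a quadratic form; since \(\E_{\Ch,2}^\sfd\geq 0\) and \(\E_{\Ch,2}^\sfd(\lambda f)=\lambda^2\,\E_{\Ch,2}^\sfd(f)\) for every \(\lambda\in\R\) and \(f\in L^2(\mm)\) are already at our disposal — the latter being a consequence of \(\lip_a^\sfd(\lambda f)=|\lambda|\,\lip_a^\sfd(f)\) together with the definition of \(\E_{\Ch,2}^\sfd\) as an \(L^2(\mm)\)-relaxation — it is enough to verify the parallelogram identity
\[
\E_{\Ch,2}^\sfd(f+g)+\E_{\Ch,2}^\sfd(f-g)=2\,\E_{\Ch,2}^\sfd(f)+2\,\E_{\Ch,2}^\sfd(g),\quad\text{ for all }f,g\in L^2(\mm).
\]
The same observations apply to each \((\X,\sfd_i,\mm)\), so by assumption every \(\E_{\Ch,2}^{\sfd_i}\) obeys this identity; that, together with the \(2\)-homogeneity of \(\E_{\Ch,2}^\sfd\) itself, is all that I will use about the quadratic-form structure. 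If \(f\notin W^{1,2}(\X)\) or \(g\notin W^{1,2}(\X)\), the identity holds trivially: since \(W^{1,2}(\X)\) is a vector space and \(f=\tfrac12\big((f+g)+(f-g)\big)\), \(g=\tfrac12\big((f+g)-(f-g)\big)\), both sides are \(+\infty\) simultaneously. So I assume from now on that \(f,g\in W^{1,2}(\X)\).

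To prove the inequality ``\(\leq\)'', I would use the strong \(\Gamma\)-\(\lims\) part of Theorem \ref{thm:main_Mosco-conv} to select recovery sequences \((f_i)_i,(g_i)_i\subseteq L^2(\mm)\) with \(f_i\to f\) and \(g_i\to g\) strongly in \(L^2(\mm)\) and \(\lims_i\E_{\Ch,2}^{\sfd_i}(f_i)\leq\E_{\Ch,2}^\sfd(f)\), \(\lims_i\E_{\Ch,2}^{\sfd_i}(g_i)\leq\E_{\Ch,2}^\sfd(g)\). Then \(f_i\pm g_i\to f\pm g\) strongly, hence weakly, in \(L^2(\mm)\), so the weak \(\Gamma\)-\(\limi\) part of Theorem \ref{thm:main_Mosco-conv}, the parallelogram identity for \(\E_{\Ch,2}^{\sfd_i}\), and the elementary bounds \(\limi a_i+\limi b_i\leq\limi(a_i+b_i)\leq\lims a_i+\lims b_i\) give
\[\begin{split}
\E_{\Ch,2}^\sfd(f+g)+\E_{\Ch,2}^\sfd(f-g)&\leq\limi_{i\to\infty}\E_{\Ch,2}^{\sfd_i}(f_i+g_i)+\limi_{i\to\infty}\E_{\Ch,2}^{\sfd_i}(f_i-g_i)\\
&\leq\limi_{i\to\infty}\Big(\E_{\Ch,2}^{\sfd_i}(f_i+g_i)+\E_{\Ch,2}^{\sfd_i}(f_i-g_i)\Big)\\
&=\limi_{i\to\infty}\Big(2\,\E_{\Ch,2}^{\sfd_i}(f_i)+2\,\E_{\Ch,2}^{\sfd_i}(g_i)\Big)\leq 2\,\E_{\Ch,2}^\sfd(f)+2\,\E_{\Ch,2}^\sfd(g).
\end{split}\]

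For the opposite inequality, I would apply the bound just obtained with \((f,g)\) replaced by \((f+g,f-g)\in W^{1,2}(\X)\), which reads \(\E_{\Ch,2}^\sfd(2f)+\E_{\Ch,2}^\sfd(2g)\leq 2\,\E_{\Ch,2}^\sfd(f+g)+2\,\E_{\Ch,2}^\sfd(f-g)\), and then use \(\E_{\Ch,2}^\sfd(2h)=4\,\E_{\Ch,2}^\sfd(h)\) to turn it into \(2\,\E_{\Ch,2}^\sfd(f)+2\,\E_{\Ch,2}^\sfd(g)\leq\E_{\Ch,2}^\sfd(f+g)+\E_{\Ch,2}^\sfd(f-g)\). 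Together, the two inequalities give the parallelogram identity, so \(\E_{\Ch,2}^\sfd\) is a quadratic form and \((\X,\sfd,\mm)\) is infinitesimally Hilbertian. The whole thing is mostly bookkeeping on top of Theorem \ref{thm:main_Mosco-conv}; the only delicate point is that Mosco-convergence transmits, by itself, only the ``\(\leq\)'' half of the parallelogram identity to the limit, so one has to complement it — through the substitution \((f,g)\mapsto(f+g,f-g)\) — with the \(2\)-homogeneity of \(\E_{\Ch,2}^\sfd\), which, however, is elementary and has nothing to do with the approximating distances.
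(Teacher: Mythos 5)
Your proposal is correct: the trivial-case reduction via the vector-space structure of \(W^{1,2}(\X)\), the ``\(\leq\)'' half of the parallelogram identity obtained by combining recovery sequences for \(f,g\) with the \(\Gamma\)-\(\limi\) inequality for \(f_i\pm g_i\) and the parallelogram identity for each \(\E_{\Ch,2}^{\sfd_i}\), and the converse inequality via the substitution \((f,g)\mapsto(f+g,f-g)\) together with the \(2\)-homogeneity of \(\E_{\Ch,2}^\sfd\), all go through; note that since \(f_i\pm g_i\to f\pm g\) strongly, you in fact only need the strong-topology \(\Gamma\)-\(\limi\), so Mosco-convergence is more than enough. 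The difference with the paper is that its proof of the corollary is a two-line citation: Theorem \ref{thm:main_Mosco-conv} gives \(\Gamma\)-convergence of \(\E_{\Ch,2}^{\sfd_i}\) to \(\E_{\Ch,2}^\sfd\) in the strong \(L^2(\mm)\) topology, and then \cite[Theorem 11.10]{DalMaso93} (the class of nonnegative quadratic forms is closed under \(\Gamma\)-convergence) is invoked as a black box. What you have written is essentially the proof of that closedness result, specialised to the present setting; it buys a self-contained argument at the cost of redoing a standard lemma, while the paper's route is shorter but outsources exactly the step you verify by hand. Both are valid, and your bookkeeping (nonnegativity avoiding \(\infty-\infty\) issues, homogeneity of the relaxed energy inherited from \(\lip_a^\sfd\)) is sound.
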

\begin{proof}
Theorem \ref{thm:main_Mosco-conv} implies that
\(\E_{\Ch,2}^{\sfd_i}\overset\Gamma\to\E_{\Ch,2}^\sfd\)
with respect to the strong topology of \(L^2(\mm)\),
thus \cite[Theorem 11.10]{DalMaso93} grants that
\(\E_{\Ch,2}^\sfd\) is a quadratic form, which gives
the statement.
\end{proof}
\begin{remark}\label{rmk:sub-Riem}{\rm
Let \(({\rm M},\sfd)\) be (the metric space associated with)
a generalised sub-Riemannian manifold, in the sense of
\cite[Definition 4.1]{LDLP19}. Then there exists a sequence
\((\sfd_i)_{i\in\N}\) of distances on \(\rm M\), induced by
Riemannian metrics, such that \(\sfd_i\nearrow\sfd\);
cf.\ \cite[Corollary 5.2]{LDLP19}. Suppose \(\sfd\) and each
\(\sfd_i\) are complete distances. Fix a Radon measure \(\mm\)
on \(\rm M\). Then \cite[Theorem 4.11]{LP20} ensures that each
\(({\rm M},\sfd_i,\mm)\) is infinitesimally Hilbertian.
Therefore, by applying Corollary \ref{cor:stab_iH} we can
conclude that \(({\rm M},\sfd,\mm)\) is infinitesimally
Hilbertian as well. This argument provides an alternative
proof of \cite[Corollary 5.6]{LDLP19}.
\fr}\end{remark}

We conclude the paper by illustrating an example which shows that the
results of this section cannot hold if the assumption of monotone
convergence from below of the distances is replaced by a monotone
convergence from above.
\begin{example}\label{ex:failure_conv_from_above}{\rm
Let \((\X,\sfd,\mm)\) be any metric measure space such that \(\sfd\leq 1\).
Given any \(i\in\N\), we define the `snowflake' distance \(\sfd_i\)
on \(\X\) as \(\sfd_i(x,y)\coloneqq\sfd(x,y)^{1-\frac{1}{i}}\) for
every \(x,y\in\X\). Then we have \(\sfd_i(x,y)\searrow\sfd(x,y)\)
as \(i\to\infty\) for all \(x,y\in\X\) and \(\tau(\sfd_i)=\tau(\sfd)\)
for all \(i\in\N\). Since absolutely continuous
curves in \((\X,\sfd_i)\) are constant, it follows from
the results in  \cite{AmbrosioGigliSavare11-3} that
\[
\E_{\Ch,p}^{\sfd_i}(f)=0,\quad\text{ for every }p\in(1,\infty)
\text{ and }f\in L^p(\mm).
\]
In particular, each space \((\X,\sfd_i,\mm)\) is infinitesimally
Hilbertian. This shows that Theorem
\ref{thm:main_Mosco-conv} and Corollary \ref{cor:stab_iH}
might fail if we replace the assumption \(\sfd_i\nearrow\sfd\)
with \(\sfd_i\searrow\sfd\).
\fr}\end{example}
\def\cprime{$'$} \def\cprime{$'$}

\end{document}